\newtheorem{theorem}{Theorem}
\newtheorem*{thm_nonum}{Theorem}
\newtheorem{claim}{Claim}
\newtheorem{proposition}{Proposition}
\newtheorem*{conj_nonum}{Conjecture}
\newtheorem{example}{Example}
\title{A Note on Integer Domination of Cartesian Product Graphs}
\author{K. Choudhary\\
\small Department of Mathematics and Statistics\\[-0.8ex]
\small Indian Institute of Technology Kanpur\\[-0.8ex]
\small Kanpur, India\\
\small \texttt{keerti.india@gmail.com}\\
\and
S. Margulies\\
\small Department of Mathematics\\[-0.8ex]
\small Pennsylvania State University\\[-0.8ex]
\small State College, PA\\
\small \texttt{margulies@math.psu.edu}\\
\and
I. V. Hicks\\
\small Department of Computational and Applied Mathematics\\[-0.8ex]
\small Rice University\\[-0.8ex]
\small Houston, TX\\
\small \texttt{ivhicks@rice.edu}
}
\begin{document}

\maketitle

\begin{abstract} 
Given a graph $G$, a dominating set $D$ is a set of vertices such that any vertex in $G$ has 
at least one neighbor (or possibly itself) in $D$. 
A $\{k\}$-dominating multiset $D_k$ is a multiset of vertices such that any vertex in $G$ has 
at least $k$ vertices from its closed neighborhood in $D_k$ when counted with multiplicity.
In this paper, we utilize the approach developed by Clark and Suen (2000) and  properties of 
binary matrices to prove a ``Vizing-like" inequality on minimum $\{k\}$-dominating multisets 
of graphs $G,H$ and the Cartesian product graph $G \Box H$. Specifically, denoting the size of 
a minimum $\{k\}$-dominating multiset as $\gamma_{\{k\}}(G)$, we demonstrate that 
$\gamma_{\{k\}}(G) \gamma_{\{k\}}(H) \leq  2k \ \gamma_{\{k\}}(G \Box H)$~.
\end{abstract}

\section{Introduction} Let $G$ be a simple undirected graph $G=(V,E)$ with vertex set $V$ and edge set $E$. The open neighborhood of a vertex $v\in V(G)$ is denoted by $N_{G}(v)$,
and the closed neighborhood of $v$ is denoted by $N_{G}[v]$.
A dominating set $D$ of a graph G is a subset of $V(G)$ such that for all $v \in V(G)$,
$N_{G}[v] \cap D \neq \emptyset$, and the size of a minimum dominating set is denoted by $\gamma(G)$. The Cartesian product of two graphs $G$ and $H$, denoted $G \Box H$, is the graph with
vertex set $V(G)\times V(H)$, where vertices $gh, g'h' \in V(G \Box H)$ are adjacent
whenever $g = g'$ and $(h,h')\in E(H)$, or $h = h'$ and $(g,g') \in E(G)$ (see Example \ref{ex_prodg_not}). 

In 1963, and again more formally in 1968, V. Vizing proposed a simple and elegant conjecture that has subsequently become one of the most famous open questions in domination theory.
\begin{conj_nonum}[Vizing \cite{vizing}, 1968]
Given graphs $G$ and $H$, $\gamma(G)\gamma(H) \leq \gamma(G \Box H)$. 
\end{conj_nonum}
Although easy to state, a definitive proof of Vizing's conjecture (or a counter-example) has remained elusive. Over the past forty years (see \cite{viz_survey_2009} and references therein), Vizing's conjecture has been shown to hold on certain restricted classes of graphs, and furthermore, upper and lower bounds on the inequality have been gradually tightened. Additionally, as numerous direct attempts on the conjecture have failed, research approaches expanded to include explorations of similar inequalities for total, paired, and fractional domination \cite{book_dom}. However, the most significant breakthrough occurred in 2000, when Clark and Suen \cite{viz_clark_suen} demonstrated that $\gamma(G)\gamma(H) \leq 2 \gamma(G \Box H)$. This ``Vizing-like" inequality immediately suggested similar inequalities for total \cite{ho_total_dom} and paired \cite{hou_jiang_pr_dom} domination (2008 and 2010, respectively). In 2011, Choudhary, Margulies and Hicks \cite{susan_ds} improved the inequalities from \cite{ho_total_dom, hou_jiang_pr_dom} for total and paired domination by applying techniques similar to those of Clark and Suen, and also specific properties of binary matrices.  In this paper, we explore \emph{integer domination} (or $\{k\}$-domination), and again generate an improved inequality with this combined technique.  

A \emph{multiset} is a set in which elements are allowed to appear more than once, e.g. $\{1, 2, 2\}$. All graphs and multisets in this paper are finite. A $\{k\}$-dominating multiset $D_k$ of a graph $G$ is a multiset of vertices of $V(G)$ such that, for each $v \in V(G)$, the number of vertices of $N_G[v]$ contained in $D_k$ (counted with multiplicity) is at least $k$. A $\gamma_{\{k\}}$-set of $G$ is a minimum
$\{k\}$-dominating multiset, and the size of a minimum $\{k\}$-dominating multiset is denoted by $\gamma_{\{k\}}(G)$. Additionally, note that a $\{1\}$-dominating multiset is equivalent to the standard dominating set. 

The notion of a $\{k\}$-dominating \emph{multiset} is equivalent to the more familiar notion of a $\{k\}$-dominating \emph{function}. The study of $\{k\}$-dominating functions was first introduced by Domke, Hedetniemi, Laskar, and Fricke \cite{intdom1} (see also \cite{book_dom2}, pg. 90), and further explored by Bre\v{s}ar, Henning and Klav\v{z}ar in \cite{intdom_ieq}. In \cite{hou_lu}, the authors investigate integer domination in terms of graphs with specific packing numbers, and in \cite{intdom_ieq}, various applications of $\{k\}$-dominating functions are described, such as physical locations (stores, buildings, etc.) serviced by up to $k$ fire stations (as opposed to the single required fire station in the canonical application of dominating sets). Finally, in \cite{intdom_ieq}, the authors prove the following ``Vizing"-like inequality:
\begin{thm_nonum} [\cite{intdom_ieq}] Given graphs $G$ and $H$, $\gamma_{\{k\}}(G)\gamma_{\{k\}}(H) \leq k(k+1) \gamma_{\{k\}}(G \Box H)$~.
\end{thm_nonum}
\noindent Observe that for $k = 1$, the  Bre\v{s}ar-Henning-Klav\v{z}ar theorem is equivalent to the bound proven by Clark and Suen. In this paper, we improve this upper bound from $O(k^2)$ to $O(k)$, and prove the following theorem:
\begin{thm_nonum} Given graphs $G$ and $H$, $\gamma_{\{k\}}(G)\gamma_{\{k\}}(H) \leq 2k \ \gamma_{\{k\}}(G \Box H)$~.
\end{thm_nonum}
\noindent Again, observe that for $k=1$, this theorem is equivalent to the bound proven by Clark and Suen. In the next section, we develop the necessary background and present the proof. 

\section{Background, Notation and Proof of Theorem} \label{sec_back} In this section, we introduce the necessary background and notation used throughout the paper, and prove several propositions to streamline the proof of Theorem \ref{thm_kdom}.

For $gh \in V(G\Box H)$, the $G$-neighborhood $\big($denoted by $N_{\underline{\mathbf{G}} \Box H}(gh)\big)$ and the $H$-neighborhood $\big($denoted by $N_{G \Box \underline{\mathbf{H}}}(gh)\big)$ are defined as follows:
\begin{align*}
N_{\underline{\mathbf{G}} \Box H}(gh) &= \{g'h \in V(G\Box H) ~\rvert~ g' \in N_G(g)\}~,\\ 
N_{G \Box \underline{\mathbf{H}}}(gh) &= \{gh' \in V(G\Box H) ~\rvert~ h' \in N_H(h)\}~. 
\end{align*}
Thus, $N_{\underline{\mathbf{G}} \Box H}(gh)$ and $N_{G \Box \underline{\mathbf{H}}}(gh)$ are both subsets of $V(G \Box H)$. Additionally, the edge set $E(G \Box H)$ can be partitioned into two sets (\textbf{G}-edges and \textbf{H}-edges) where
\begin{align*}
\text{\textbf{G}-edges} &= \{(gh,g'h) \in E(G\Box H) ~\rvert~ h \in V(H) \text{ and } ( g,g')\in E(G)\}~, \\
\text{\textbf{H}-edges} &= \{(gh,gh')\in E(G\Box H) ~\rvert~ g \in V(G) \text{ and } (h,h')\in E(H)\}~.
\end{align*}
Given a dominating set $D$ of a graph $G$ and a vertex $v \in V(G)$, we say that the vertices in $N_G[v] \cap D$ are the \emph{v-dominators} in $D$.

A \emph{union of multisets} is denoted by $\uplus$, e.g. $\{1,2,2\} \uplus \{1,2,3\} = \{1,1,2,2,2,3\}$. The union of a multiset
with itself $t$ times is denoted by $\uplus^t$, e.g. $\uplus^2 \{1,2,2\} = \{1,2,2\} \uplus \{1,2,2\} = \{1,1,2,2,2,2\}$.
The \emph{cardinality of a multiset} is equal to the summation over the number of occurrences
of each of its elements, e.g. $\big|\{1,2,2\}\big| = 3$, and given a multiset $A$, we denote the number of occurrences of a 
particular element $a$ in $A$ as $|A|_a$, e.g. $\big|\{1,2,2\}\big|_2 = 2$, and $\big|\{1,2,2\}\big|_4 = 0$.  
A multiset $B$ is a \emph{sub-multiset} of multiset A if each element $b \in B$ is present
in $A$, and $|B|_b \leq |A|_b$, e.g. $\{1, 2, 2\} \subseteq  \{1, 2, 2, 2\} \varsubsetneq \{1,2\}$. 
Finally, let $A$ be a multiset and $B$ a set. Then, $|A|_B = \sum_{b \in B} |A|_b$. For example, $\{1,1,2,5,6,6\}_{\{1,4,6\}} = 4$.

Given graphs $G$ and $H$, let $A \subseteq \uplus^t~V(G\Box H)$, where $t$ is any positive integer. When defining a multiset, we must not only describe the elements contained in the multiset, but also define the number of times a specific element appears in the multiset. Thus, the $\varPhi$-projection and $\varPsi$-projection of $A$ on graphs $G$ and $H$ are multisets defined as
\begin{align*}
\varPhi_{G}(A) &= \Big\{g \in V(G)~:~\exists ~h \in V(H) \text{ with } gh \in A, \text{ where }\big|\varPhi_{G}(A)\big|_g =  \max \big\{|A|_{gh}~:~h\in V(H)\big\} \Big\}~,\\
\varPhi_{H}(A) &= \Big\{h \in V(H)~:~\exists~g \in V(G) \text{ with } gh \in A, \text{ where }\big|\varPhi_{H}(A)\big|_h =  \max \big\{|A|_{gh}~:~g\in V(G)\big\}\Big\}~,\\
\varPsi_{G}(A) &= \Big\{g \in V(G)~:~\exists~h \in V(H) \text{ with } gh \in A, \text{ where }\big|\varPsi_{G}(A)\big|_g = \textstyle{\sum}_{h\in V(H)} |A|_{gh}\Big\}~,\\
\varPsi_{H}(A) &= \Big\{h \in V(H)~:~\exists ~g \in V(G) \text{ with } gh \in A, \text{ where }\big|\varPsi_{H}(A)\big|_h = \textstyle{\sum}_{g\in V(G)} |A|_{gh}\Big\}~.
\end{align*}
Note that multisets $\varPhi_{G}(A)$ and $\varPsi_{G}(A)$ contain identical elements, but the number of occurrences of a given $g$ in $\varPhi_{G}(A)$ is defined by a \emph{max}, whereas the number of occurrences of the same $g$ in $\varPsi_{G}(A)$ is defined by a \emph{sum}. This \emph{max/sum} distinction in these multiset definitions will play a critical role of our proof of Theorem \ref{thm_kdom}. We now present an example of $\varPhi_{G}(A)$ and $\varPsi_{G}(A)$.

\begin{example} \label{ex_prodg_not}  Consider graphs $G, H$ and $G \Box H$:
\vspace{-25pt}
\begin{figure}[ht]
\hspace{15pt}\begin{minipage}{0.2\linewidth}
\begin{center}
\includegraphics[scale=.25,trim=0 200 700 0]{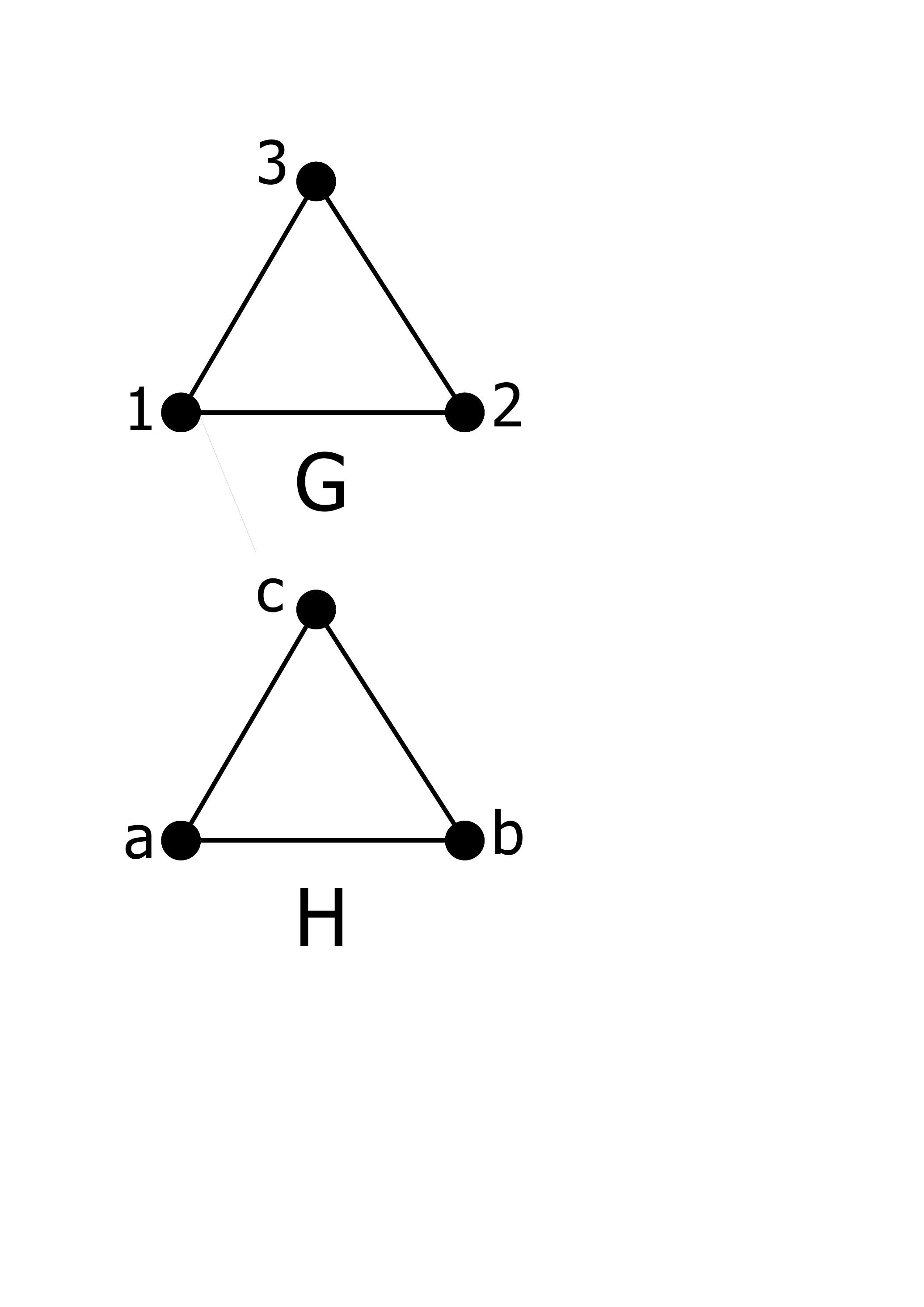}
\end{center}
\end{minipage}
\begin{minipage}{0.4\linewidth}
\begin{center}
\includegraphics[page=1,scale=.25,trim=0 540 490 225]{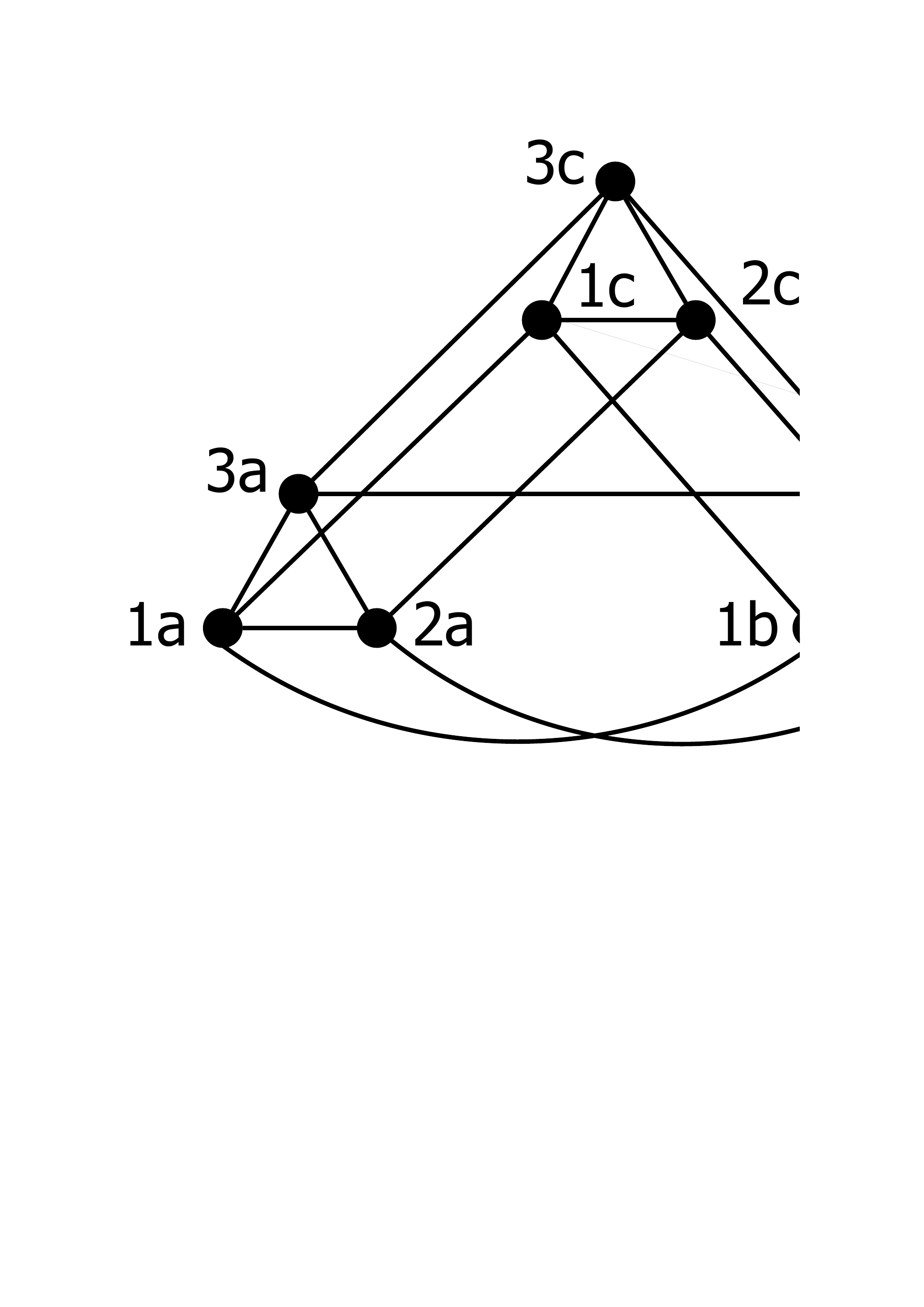}
\end{center}
\end{minipage}
\begin{minipage}{0.4\linewidth}
\begin{center}
\includegraphics[page=2,scale=.25,trim=402 540 200 225]{tri_tri_prod.pdf}
\end{center}
\end{minipage}
\caption*{(a) $G, H$ and $G \Box H$} \label{fig_GHprod}
\end{figure}

Let $A = \{1b, 1c, 1c, 2a, 2a, 2a, 2b\} \subseteq \uplus^3\hspace{1pt}V(G\Box H)$. Note $|A|_{1b} = 1, |A|_{1c} = 2, |A|_{2a} = 3$, and $|A|_{2b} = 1$. Then $\max\big\{|A|_{1h} : h \in V(H) \big\} = 2, \max\big\{|A|_{2h} : h \in V(H) \big\} = 3, \sum_{h \in V(H)}|A|_{1h} = 3$ and $\sum_{h \in V(H)}|A|_{2h} = 4$. Therefore, $\varPhi_{G}(A) = \{1,1,2,2,2\}$ and $\varPsi_{G}(A) = \{1,1,1,2,2,2,2\}$. \hfill $\Box$
\end{example}

Let $\{P_1, P_2, \ldots, P_t\}$ be a multiset of subsets of a set $A$. Then $P^A = \{P_1, P_2, \ldots, P_t\}$ is a \emph{$k$-partition} of $A$ if each element of $A$ is present in
exactly $k$ of the sets $P_1,\ldots, P_t$. For example, given $A=\{1,2,3,4,5,6,7\}$, then $P^A = \big\{\{1,2,3\}, \{1,2,3\}, \{4,5,6\}, \{6,7\},$ $\{5,7\}, \{4\}\big\}$ is a 2-partition of $A$. We observe that the subset $\{1,2,3\}$ is present twice in $P^A$, demonstrating that a $k$-partition can be a multiset.

\begin{example} \label{ex_kpart} Consider the following graph $G$:
\vspace{-10pt}
\begin{center}
\includegraphics[scale=.25,trim=0 500 0 75]{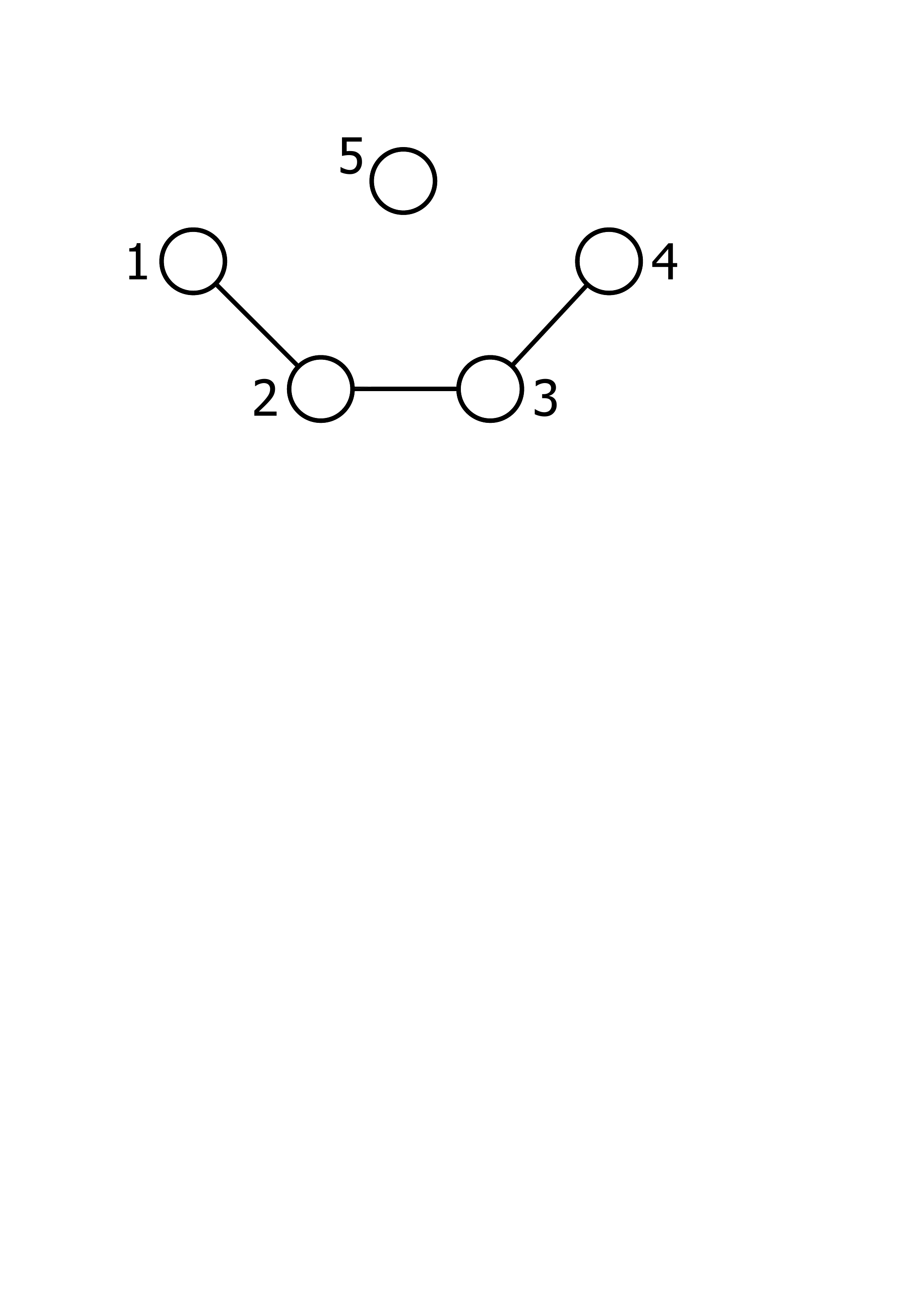}
\end{center}
\vspace{-10pt}
Here, a 2-partition of $V(G)$ is $P^G = \big\{\{1\}, \{1,2,3\}, \{2,3,4\}, \{4\}, \{5\}, \{5\}\big\} = \{P^G_1,\ldots,P^G_6\}$. Observe that each $v \in V(G)$ appears in exactly \emph{two} sets in $P^G$. Also, note that each $P^G_i$ is a \emph{set} (i.e., it contains no duplicated elements), but $P^G$ itself is a \emph{multiset}. Finally, observe that a minimum 2-dominating multiset of $G$ is  $\{1,2,3,4,5,5\}$, and thus  $\gamma_{\{2\}}(G) = 6$. \hfill $\Box$
\end{example}

Given a graph $G$, we will now define the concept of domination among multisets of $V(G)$. Given a positive integer $t$ and multisets $A, B \subseteq \uplus^t\hspace{1pt}V(G)$, we say that $A$ \emph{dominates} $B$ if,  for each $b\in B$, the number of vertices of $N_G[b]$
present in $A$ (counted with multiplicity) is \emph{at least} the number of occurrences of $b \in B$. In other words,
\begin{align*}
|A|_{N_G[b]} &\geq |B|_{b}~, \quad \quad \text{ for each $b\in B$}~.
\end{align*}
The following proposition can now be verified.

\begin{proposition} \label{prop_gmdom} Given graphs $G$, $H$, and positive integers $t,k$, with $t \geq k$, then 
$\phantom{xxx}$
\begin{enumerate}
	\item A multiset $A \subseteq \uplus^t\hspace{1pt}V(G)$ is a $\{k\}$-dominating multiset for $G$ if and only if $A$ dominates $\uplus^k\hspace{1pt}V(G)$.
	\item Given multisets $A, B, A', B' \subseteq \uplus^t\hspace{1pt}V(G)$, if $A$ dominates $A'$, and $B$ dominates $B'$,
then $A\uplus B$ dominates $A'\uplus B'$.
	\item Given multisets $A, A' \subseteq \uplus^t\hspace{1pt}V(G \Box H)$, if $A$ dominates $A'$, then $\varPsi_H (A)$ dominates $\varPsi_H (A')$.
\end{enumerate}
\end{proposition}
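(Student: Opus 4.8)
The first two parts follow directly from the definitions, so I would dispatch them quickly and reserve the effort for the third. For Part 1, I would note that $\uplus^k V(G)$ is the multiset with $\big|\uplus^k V(G)\big|_v = k$ for every $v \in V(G)$; hence ``$A$ dominates $\uplus^k V(G)$'' unwinds to ``$|A|_{N_G[v]} \ge k$ for all $v$,'' which is verbatim the definition of a $\{k\}$-dominating multiset. Both directions are then the same statement read two ways, the hypothesis $t \ge k$ serving only to guarantee $\uplus^k V(G) \subseteq \uplus^t V(G)$. For Part 2 I would use additivity of multiplicities under $\uplus$: for every vertex $c$, $|A \uplus B|_{N_G[c]} = |A|_{N_G[c]} + |B|_{N_G[c]} \ge |A'|_c + |B'|_c = |A' \uplus B'|_c$, where each summand inequality is the hypothesis (and is trivially valid when $c$ lies outside $A'$ or $B'$). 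Adding the two inequalities gives the claim.

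Part 3 is the substantive statement, and I would reduce it to one scalar inequality at each vertex of $H$. Fixing $h' \in V(H)$, it suffices to prove $|\varPsi_H(A)|_{N_H[h']} \ge |\varPsi_H(A')|_{h'}$. Unwinding $\varPsi_H$, the demand on the right is $\sum_{g \in V(G)} |A'|_{gh'}$ and the supply on the left is $\sum_{h \in N_H[h']}\sum_{g \in V(G)} |A|_{gh}$. The hypothesis that $A$ dominates $A'$ supplies, for each fixed $g$, the inequality $|A|_{N_{G \Box H}[gh']} \ge |A'|_{gh'}$, and summing these over all $g \in V(G)$ reproduces exactly the right-hand demand. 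The plan is therefore to sum the product-domination inequalities over the $G$-coordinate and to match the resulting total against the projected supply on the left.

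To execute this I would split the closed product-neighborhood into its two orientations, $N_{G \Box H}[gh'] = \{gh : h \in N_H[h']\} \sqcup \{g'h' : g' \in N_G(g)\}$. Summing the first (the $H$-direction block) over $g$ reassembles precisely $|\varPsi_H(A)|_{N_H[h']}$, the quantity we want. The main obstacle is the second (the $G$-direction block): summing it over $g$ leaves the nonnegative cross term $\sum_{g}\sum_{g' \in N_G(g)} |A|_{g'h'}$, whose mass lies entirely on the slice $h = h'$. Since this term lands on the same side as the supply, a naive summation yields only $|\varPsi_H(A)|_{N_H[h']} + (\text{cross term}) \ge \sum_g |A'|_{gh'}$, which is weaker than the claim. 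The heart of the proof --- the step I expect to be hardest --- is thus to show this cross term never forces a deficit, i.e.\ that the $H$-direction contributions alone already cover the demand. I would try to control it by observing that its masses are, up to a degree factor, the same masses already recorded in the $h = h'$ slice of $\varPsi_H(A)$, and would then set up the accounting so that the demand at $h'$ is charged against the projected supply without double-counting the collapsed $G$-direction. Getting this bookkeeping exact, rather than merely up to the cross term, is the delicate point on which the whole proposition turns.
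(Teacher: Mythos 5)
Your treatments of parts 1 and 2 are correct and complete; the paper itself omits the proof entirely, dismissing the whole proposition as a ``straightforward application of the definitions,'' so there is nothing to compare against there. The problem is part 3. You correctly isolate the crux --- after splitting $N_{G\Box H}[gh']$ into its $H$-direction block and its $G$-direction block and summing over $g$, the leftover cross term $\sum_{g}\sum_{g'\in N_G(g)}|A|_{g'h'}$ lands on the supply side --- but you never actually discharge it, and you cannot: the statement as printed is false. Take $G=P_3$ with vertices $g_1,c,g_2$ and edges $(g_1,c),(c,g_2)$, let $H=K_1=\{h\}$, and set $A=\{ch\}$, $A'=\{g_1h,g_2h\}$. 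Then $A$ dominates $A'$ (each of $g_1h$ and $g_2h$ has the single copy of $ch$ in its closed neighborhood), but $\varPsi_H(A)=\{h\}$ while $\varPsi_H(A')=\{h,h\}$, and $1\not\geq 2$. The one unit of mass at $ch$ is counted once as a dominator of $g_1h$ and once as a dominator of $g_2h$; after projection both demands collapse onto the same vertex $h$ and the supply cannot cover them. This is exactly the double-counting you flagged as delicate, and no bookkeeping makes it go away.

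What saves the paper is that part 3 is never actually invoked in the proof of Theorem \ref{thm_kdom}: the statement that does the work there is Claim \ref{cl_dom}, which shows $\varPsi_H(S_i)$ dominates $Y_i=\varPhi_H(Z_i)$ --- a $\varPsi$-projection dominating a $\varPhi$-projection. The correct form of part 3 is almost certainly ``$\varPsi_H(A)$ dominates $\varPhi_H(A')$,'' with the \emph{max} rather than the \emph{sum} on the dominated side; this is precisely the max/sum distinction the paper advertises as critical. With that repair your own machinery closes the argument in one line and the cross term never arises: fix $h'$, choose $g^*$ attaining $\max_{g}|A'|_{gh'}=|\varPhi_H(A')|_{h'}$, observe that every vertex of $N_{G\Box H}[g^*h']$ (in either orientation) lies in $V(G)\times N_H[h']$, and conclude $|\varPsi_H(A)|_{N_H[h']}\geq |A|_{N_{G\Box H}[g^*h']}\geq |A'|_{g^*h'}=|\varPhi_H(A')|_{h'}$. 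So your instinct about where the difficulty lies was exactly right; the resolution is not cleverer accounting but a corrected statement.
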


The proof of Prop. \ref{prop_gmdom} is a straight forward application of the definitions. Thus, we skip the proof for space considerations.

\begin{proposition} \label{prop_domrel} Given graphs $G, H$,  let $\{u_1 ,\ldots, u_{\gamma_{\{k\}}(G)}\}$ and  $\{\overline{u}_1 ,\ldots, \overline{u}_{\gamma_{\{k\}}(H)}\}$ be minimum $\{k\}$-dominating multisets of $G, H$, respectively, and let $P^G = \{P^G_1, \ldots , P^G_{\gamma_{\{k\}}(G)}\}$ and $P^H = \{P^H_1, \ldots ,$ $ P^H_{\gamma_{\{k\}}(H)}\}$be a $k$-partitions of $V(G), V(H)$ respectively, such that $u_i \in P^G_i$ and $P^G_i \subseteq N_G[u_i]$ (and $\overline{u}_j \in P^H_j$ and $P^H_{j}\subseteq N_H[\overline{u} _{j}]$, respectively) for $1 \leq i \leq \gamma_{\{k\}}(G)$ and   $1 \leq j \leq \gamma_{\{k\}}(H)$. 
\begin{enumerate}
	\item Let $I\subseteq \{1,\ldots,\gamma_{\{k\}}(G)\}$,
$A = \uplus_{i\in I} \{u_i\} $, and
$C = \uplus_{i\in I} P_i^G$. Then $A$ dominates $C$. Furthermore, given an integer ${k'} \geq k$, let $B \subseteq \uplus^{k'} \hspace{1pt}V(G)$
be any other multiset dominating $C$.
Then $\lvert B\lvert \geq \lvert A\lvert$.
	\item Let $J\subseteq \{1, \ldots, \gamma_{\{k\}}(H)\}$,
$\overline{A} = \displaystyle\uplus_{j\in J} \{ \overline{u}_j \} $, and
$\overline{C} = \uplus_{j\in J} P_j^H$. Then $\overline{A}$ dominates
$\overline{C}$. Furthermore,  given an integer ${k'} \geq k$, let $\overline{B} \subseteq \uplus^{k'} \hspace{1pt} V(H)$ be any other multiset dominating $\overline{C}$.
Then $\lvert \overline{B} \lvert \geq \lvert \overline{A} \lvert$.
\end{enumerate}
\end{proposition}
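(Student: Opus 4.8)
The two statements are symmetric under interchanging the roles of $G$ and $H$, so I would prove only the first and obtain the second by the identical argument with $\overline{u}_j$, $P^H_j$, and $\gamma_{\{k\}}(H)$ in place of $u_i$, $P^G_i$, and $\gamma_{\{k\}}(G)$. Throughout I would keep in mind that $|A| = |I|$, since $A = \uplus_{i \in I}\{u_i\}$ contributes exactly one vertex per index $i \in I$, and likewise that $A$ and $C$ are indexed by the \emph{same} set $I$.

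For the domination claim that $A$ dominates $C$, I would simply unwind the definitions. Fix $c \in V(G)$. Because each $P^G_i$ is a set, $|C|_c$ equals the number of indices $i \in I$ with $c \in P^G_i$, while $|A|_{N_G[c]}$ equals the number of indices $i \in I$ with $u_i \in N_G[c]$. Now if $c \in P^G_i$ then $c \in P^G_i \subseteq N_G[u_i]$, and since the closed-neighborhood relation is symmetric ($c \in N_G[u_i]$ iff $u_i \in N_G[c]$), this forces $u_i \in N_G[c]$. Hence $\{i \in I : c \in P^G_i\} \subseteq \{i \in I : u_i \in N_G[c]\}$, giving $|C|_c \le |A|_{N_G[c]}$ for every $c$, which is exactly the statement that $A$ dominates $C$.

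The substantive part is the optimality bound $|B| \ge |A|$, and here my plan is the ``completion'' trick underlying the Clark--Suen method. Let $\overline{I} = \{1,\ldots,\gamma_{\{k\}}(G)\} \setminus I$, and set $A' = \uplus_{i \in \overline{I}}\{u_i\}$ and $C' = \uplus_{i \in \overline{I}} P^G_i$. By the domination claim just proved (applied to $\overline{I}$ in place of $I$), $A'$ dominates $C'$. Since $P^G$ is a $k$-partition of $V(G)$, every vertex of $G$ lies in exactly $k$ of the sets $P^G_1,\ldots,P^G_{\gamma_{\{k\}}(G)}$, and therefore $C \uplus C' = \uplus_{i=1}^{\gamma_{\{k\}}(G)} P^G_i = \uplus^k V(G)$. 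Now $B$ dominates $C$ and $A'$ dominates $C'$, so by Proposition~\ref{prop_gmdom}(2) the multiset $B \uplus A'$ dominates $C \uplus C' = \uplus^k V(G)$; by Proposition~\ref{prop_gmdom}(1) this means $B \uplus A'$ is a $\{k\}$-dominating multiset of $G$. Minimality of $\gamma_{\{k\}}(G)$ then yields $|B \uplus A'| \ge \gamma_{\{k\}}(G)$.

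Finally I would extract the bound by counting: $|B \uplus A'| = |B| + |A'| = |B| + |\overline{I}|$ and $\gamma_{\{k\}}(G) = |I| + |\overline{I}|$, so $|B| + |\overline{I}| \ge |I| + |\overline{I}|$ collapses to $|B| \ge |I| = |A|$, as required. The main obstacle is conceptual rather than computational: one must resist arguing directly about $B$ versus $C$, and instead complete $B$ with the ``leftover'' dominators $A'$ to build a genuine $\{k\}$-dominating multiset, so that the minimality of $\gamma_{\{k\}}(G)$ can be invoked exactly once. The only technical points to verify are that all multisets in play stay inside some $\uplus^t\,V(G)$ with $t \ge k$ (arranged by taking $t$ large enough, where the hypothesis $k' \ge k$ keeps $B$ admissible), and the multiplicity bookkeeping in the domination claim.
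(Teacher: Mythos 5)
Your proof is correct and follows essentially the same route as the paper: your completion multiset $A' = \uplus_{i\in\overline{I}}\{u_i\}$ is exactly the paper's $W = \uplus_{i\notin I}\{u_i\}$, and both arguments combine $B$ with it via Proposition~\ref{prop_gmdom} to produce a $\{k\}$-dominating multiset and then invoke minimality of $\gamma_{\{k\}}(G)$. Your explicit multiplicity bookkeeping in the domination claim is slightly more detailed than the paper's one-line appeal to Proposition~\ref{prop_gmdom}.2, but the substance is identical.
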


\noindent \emph{Proof of Prop. \ref{prop_domrel}.1:} We will first prove $A$ dominates $C$. Since $P^G_i \subseteq N_G[u_i]$, $u_i$ dominates $P^G_i$. Therefore, $\uplus_{i\in I} \{u_i\}$ dominates $\uplus_{i\in I} P_i^G$,
i.e. $A$ dominates $C$. Now let multiset $W = \uplus_{i\notin I} \{u_i\} $. Since $B$ is any multiset dominating $C$, $B \uplus W$ dominates $\Big( \uplus_{i\in I} P_i^G \Big) \uplus \Big( \uplus_{i\notin I} P_i^G \Big)$ (by Prop. \ref{prop_gmdom}.2). Since $P^G$ is a $k$-partition,  $\Big( \uplus_{i\in I} P_i^G \Big) \uplus \Big( \uplus_{i\notin I} P_i^G \Big) =  \uplus^k\hspace{1pt}V(G)$. Since $A\uplus W = \{u_1,\ldots, u_{\gamma_k(G)} \}$ is a $\gamma_{\{k\}}$-set of $G$, $A\uplus W$
dominates $\uplus^k\hspace{1pt}V(G)$ (by Prop. \ref{prop_gmdom}.1). Therefore
$\lvert B\uplus W \lvert \geq \lvert A\uplus W \lvert$, and $\lvert B\lvert \geq \lvert A\lvert$. The proof of Prop. \ref{prop_domrel}.2 follows similarly. \hfill $\Box$

In the introduction, we stated that the proof of Theorem \ref{thm_kdom} relies on the double-projection technique of Clark and Suen, and also a particular property of binary matrices. Specifically, we have the following proposition:

\begin{proposition}\label{fact1}
Let $M$ be a matrix containing only 0/1 entries. Then one (or both) of the following two statements are true:
\vspace{-6pt}
\begin{enumerate}[label=(\alph*)]
\item each column contains a 1~,
\vspace{-8pt}
\item each row contains a 0~.
\end{enumerate}
\end{proposition}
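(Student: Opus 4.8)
The plan is to prove the dichotomy directly by splitting on whether statement (a) holds. If every column of $M$ contains a $1$, then (a) is true and there is nothing further to prove. So the only case that requires work is when (a) fails, and the goal becomes to establish (b) in that situation.

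Suppose then that (a) is false. By definition this means that some column, say column $j$, contains no $1$. Since the entries of $M$ are restricted to $0$ and $1$, the phrase ``contains no $1$'' is equivalent to ``every entry is $0$''; that is, column $j$ is the all-zero column. I would isolate this observation as the single key step, because it is precisely the place where the $0/1$ hypothesis is used: for an arbitrary real matrix the negation of ``contains a $1$'' would not force the column to vanish, but for a binary matrix it does.

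From here (b) follows immediately. Every row of $M$ has an entry lying in column $j$, and by the previous step that entry equals $0$; hence every row contains a $0$, which is exactly statement (b). This closes the argument.

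I do not expect any genuine obstacle, as the statement is elementary; the only points demanding care are logical bookkeeping. First, I am proving the \emph{inclusive} disjunction ``(a) or (b)'' by showing that the failure of (a) forces (b) (so that whenever (a) is false, (b) must hold, and both may of course hold simultaneously). Second, any degenerate case is handled automatically: if $M$ has no columns then (a) is vacuously true, and if $M$ has no rows then (b) is vacuously true, so no separate treatment of the empty matrix is required.
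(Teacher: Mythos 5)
Your proof is correct and is essentially the paper's argument recast in direct (contrapositive) form: the paper assumes a row with no $0$ and a column with no $1$ and derives a contradiction at their crossing entry, while you assume only that some column has no $1$ and read off a $0$ in every row from that all-zero column. Both hinge on the same observation that each entry must be $0$ or $1$, so no substantive difference.
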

\begin{proof}
For a proof by contradiction, assume there exists a row (say $i$) which does not contain a 0, and a column (say $j$) which does not contain a 1. 
Then, the entry $M[i,j]$ is neither 0 nor 1, which is a contradiction.
\end{proof}

We are now ready to state and prove the main theorem of the paper.

\begin{theorem} \label{thm_kdom}
For graphs $G$ and $H$, 
$\gamma_{\{k\}}(G) \gamma_{\{k\}}(H) \leq 2k \ \gamma_{\{k\}}(G \Box H)$~.
\end{theorem}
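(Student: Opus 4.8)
The plan is to carry the double-projection method of Clark and Suen into the integer setting, using the $k$-partitions supplied by Proposition~\ref{prop_domrel} to generate one factor of $k$ from the projection and the binary-matrix dichotomy of Proposition~\ref{fact1} to keep the unavoidable overcounting of cross-fiber dominators down to a second factor of $k$. First I would fix minimum $\{k\}$-dominating multisets $\{u_1,\dots,u_{\gamma_{\{k\}}(G)}\}$ and $\{\overline u_1,\dots,\overline u_{\gamma_{\{k\}}(H)}\}$ together with associated $k$-partitions $P^G,P^H$ exactly as in Proposition~\ref{prop_domrel}, and let $D$ be a $\gamma_{\{k\}}$-set of $G\Box H$, writing $d_{gh}=|D|_{gh}$. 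For each slab index $i$ I would set $D_i=D\cap\bigl(P^G_i\times V(H)\bigr)$ and project onto $H$ by the \emph{sum}-projection $R_i=\varPsi_H(D_i)$, so that $|R_i|=|D_i|$. Since the $k$-partition places each $g\in V(G)$ in exactly $k$ of the sets $P^G_i$, one gets $\sum_i|D_i|=k\,|D|=k\,\gamma_{\{k\}}(G\Box H)$; this identity is the source of the first $k$.

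Next I would localize the domination condition of $G\Box H$ to the column over $u_i$: for every $h\in V(H)$ the vertex $u_ih$ forces $\sum_{h'\in N_H[h]}d_{u_ih'}+\sum_{g'\in N_G(u_i)}d_{g'h}\ge k$. Writing $\alpha_{i,h}$ for the first (``$H$-dominator'') sum and $\beta_{i,h}$ for the second (``$G$-dominator'') sum, note $\beta_{i,h}\ge k-\alpha_{i,h}$. Because $u_i\in P^G_i$, the multiset $R_i$ already supplies at least $\alpha_{i,h}$ dominators to $h$ inside $H$, so defining the deficiency multiset $\Delta_i$ on $V(H)$ by $|\Delta_i|_h=\max\{0,\,k-\alpha_{i,h}\}$ makes $R_i\uplus\Delta_i$ dominate $\uplus^k V(H)$: each $h$ receives at least $\alpha_{i,h}+(k-\alpha_{i,h})=k$ dominators, using $h\in N_H[h]$. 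By Proposition~\ref{prop_gmdom}.1 and minimality of $\gamma_{\{k\}}(H)$ this gives $|R_i|+|\Delta_i|\ge\gamma_{\{k\}}(H)$, and summing over $i$ yields $k\,\gamma_{\{k\}}(G\Box H)+\sum_i|\Delta_i|\ge\gamma_{\{k\}}(G)\,\gamma_{\{k\}}(H)$. (When only part of $H$ is under-dominated I would restrict to the relevant index set $J\subseteq\{1,\dots,\gamma_{\{k\}}(H)\}$ and invoke the sharper partial lower bound of Proposition~\ref{prop_domrel}.2 rather than bare minimality.)

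The entire theorem then reduces to the single estimate $\sum_i|\Delta_i|\le k\,\gamma_{\{k\}}(G\Box H)$, and this is where I expect the genuine difficulty to lie. The naive bound $|\Delta_i|_h\le\beta_{i,h}$ fails, because one cross-fiber dominator $g'h\in D$ feeds $\beta_{i,h}$ for \emph{every} slab $i$ with $u_i\in N_G(g')$; summing $\beta_{i,h}$ over $i$ charges such an element once per adjacent $u_i$, which is precisely the slack that forces the weaker $k(k+1)$ bound of Bre\v sar--Henning--Klav\v zar. To cut this down I would fix $h$, let $I_h=\{i:\alpha_{i,h}<k\}$ be the deficient slabs, and record the incidence of the $G$-fiber $B^{(h)}$ (with $|B^{(h)}|_{g'}=d_{g'h}$) against the chunk $\uplus_{i\in I_h}P^G_i$ in a $0/1$ matrix, to which Proposition~\ref{fact1} applies. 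In the ``every column contains a $1$'' case $B^{(h)}$ dominates that chunk, so Proposition~\ref{prop_domrel}.1 caps $|I_h|$ by $|B^{(h)}|$; since each deficiency is at most $k$, this gives $\sum_{i\in I_h}\delta_{i,h}\le k\,|B^{(h)}|$, and summing over $h$ (where $\sum_h|B^{(h)}|=|D|$) delivers the desired $k\,|D|$. In the complementary ``every row contains a $0$'' case each deficient slab has a vertex not reached by $B^{(h)}$, and I would absorb that deficiency back into the fiber count on the $R_i$ side. Routing the deficiency through the \emph{max}-projection $\varPhi_H$ rather than $\varPsi_H$ is what guarantees each column of $D$ is counted only once per class, pinning the multiple at $k$.

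Combining the two halves gives $\gamma_{\{k\}}(G)\,\gamma_{\{k\}}(H)\le k\,\gamma_{\{k\}}(G\Box H)+k\,\gamma_{\{k\}}(G\Box H)=2k\,\gamma_{\{k\}}(G\Box H)$, which is the assertion of Theorem~\ref{thm_kdom}. The main obstacle is the deficiency bound of the previous paragraph: making the Proposition~\ref{fact1} dichotomy interact cleanly with \emph{repeated} vertices in the $\{k\}$-dominating multisets (so that the charging is injective up to multiplicity $k$) and verifying that the $\varPhi_H/\varPsi_H$ bookkeeping yields the factor $k$ exactly, rather than some larger multiple, is the crux on which the improvement from $k(k+1)$ to $2k$ rests.
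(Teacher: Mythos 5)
Your first two paragraphs are sound: the localization at the column over $u_i$, the deficiency multiset $\Delta_i$, the identity $\sum_i|R_i|=k\,\gamma_{\{k\}}(G\Box H)$, and the reduction of the theorem to the single estimate $\sum_i|\Delta_i|\le k\,\gamma_{\{k\}}(G\Box H)$ are all correct (and this asymmetric ``project, then bound the deficiency'' architecture is a legitimate variant of Clark--Suen). The gap is that the third paragraph, which you yourself identify as the crux, is not a proof, and neither case of the Proposition~\ref{fact1} dichotomy is actually handled. In the ``every column contains a $1$'' case you need $B^{(h)}$ to dominate the \emph{multiset} $\uplus_{i\in I_h}P^G_i$ before Proposition~\ref{prop_domrel}.1 applies, and a single $1$ per column certifies only one unit of domination: a vertex $g$ lying in several deficient blocks $P^G_i$ (with $i\in I_h$) may have all of its up-to-$k$ occurrences in the chunk witnessed by the same single element $g'h$ of $B^{(h)}$, so $|B^{(h)}|_{N_G[g]}$ can be strictly smaller than the multiplicity of $g$ in the chunk. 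The paper closes exactly this hole with a device your proposal lacks: it fixes, for each vertex of $G\Box H$, an ordered list $d_0,\dots,d_{k-1}$ of $k$ of its dominators and assigns $d_{(s+r)\bmod k}$ to the copy of that vertex arising from block $P^G_{f_g(s)}\times P^H_{f_h(r)}$, so that the $k$ copies sharing one index receive all $k$ distinct dominator indices; this is what makes the multiplicity count in Claim~\ref{cl_dom} go through.

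Moreover, Proposition~\ref{fact1} only guarantees that \emph{at least one} of (a), (b) holds for a given matrix, so you cannot treat the two cases as complementary contributions to the same sum: when only ``every row contains a $0$'' holds you have no bound on $\sum_{i\in I_h}\delta_{i,h}$ at all, and ``absorb that deficiency back into the fiber count on the $R_i$ side'' is not an argument --- the $R_i$'s have already been spent in full to produce the first $k\,\gamma_{\{k\}}(G\Box H)$. The paper avoids this by running a \emph{symmetric} charging scheme: each block $P^G_i\times P^H_j$ is charged to the strip $S_i=D_k\cap\big(\uplus^k(P^G_i\times V(H))\big)$ if its matrix $F_{ij}$ satisfies (a) and to $\overline{S}_j$ if it satisfies (b); every block is charged somewhere, and each strip family sums to $k\,\gamma_{\{k\}}(G\Box H)$, giving $2k$ without ever needing a deficiency bound. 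To rescue your route you would need to (i) define the per-$h$ matrix precisely, (ii) import the cyclic dominator assignment to obtain multiset domination in case (a), and (iii) produce an actual payment in case (b) drawn from a resource not already used --- at which point you will essentially have reconstructed the paper's two-sided argument.
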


\begin{proof} We begin with the same notation as in Prop. \ref{prop_domrel}. Let $\{u_1 ,\ldots, u_{\gamma_{\{k\}}(G)}\}$ and  
$\{\overline{u}_1 ,\ldots, \overline{u}_{\gamma_{\{k\}}(H)}\}$ be minimum $\{k\}$-dominating multisets of $G, H$, respectively, 
and let $P^G = \{P^G_1, \ldots , P^G_{\gamma_{\{k\}}(G)}\}$ and $P^H = \{P^H_1, \ldots ,$ $ P^H_{\gamma_{\{k\}}(H)}\}$be a 
$k$-partitions of $V(G), V(H)$ respectively, such that $u_i \in P^G_i$ and $P^G_i \subseteq N_G[u_i]$ (and $\overline{u}_j \in P^H_j$ 
and $P^H_{j}\subseteq N_H[\overline{u} _{j}]$, respectively) for $1 \leq i \leq \gamma_{\{k\}}(G)$ and   $1 \leq j \leq \gamma_{\{k\}}(H)$. 
Recall from Example \ref{ex_kpart} that $P^G_i$ and $P^G_j$ may be completely distinct, equal, or overlap in parts. Finally, 
observe that since $P^G$ and $P^H$ are $k$-partitions of $V(G), V(H)$, respectively, $P^G \times P^H$ is a $k^2$-partition of $V(G \Box H)$.

We now describe a notation for uniquely identifying \emph{different} occurrences of the \emph{same} vertex $gh \in V(G \Box H)$ 
in the $k^2$-partition $P^G \times P^H$. Let $I = \{1,\ldots,\gamma_{\{k\}}(G)\}$, $J = \{1,\ldots,\gamma_{\{k\}}(H)\}$, and 
$\overline{V} =\uplus^{k^2}\hspace{1pt}  V(G\Box H) = \uplus_{i\in I} \uplus_{j\in J} (P_i^G \times P_j^H)$. Since $P^G$ is a 
$k$-partition of $V(G)$ and $P^H$ is a $k$-partition of $V(H)$, for each $gh \in V(G\Box H)$, let $f_g : \{1,\ldots,k\} \rightarrow I$ 
and $f_h : \{1,\ldots,k\} \rightarrow J$ be one-to-one functions that identify the $k$ blocks where vertex $g$ appears in the $k$-partition 
$P^G$ (and similarly for $P^H$). Thus, the $k$ copies of $g$ in $P^G$ appear in blocks $P_{f_g(1)}^G, \ldots, P_{f_g(k)}^G$, and 
similarly for $P^H$. Let $(gh)_{sr}$ (for $1 \leq s,r \leq k$) indicate the $sr$-th copy of vertex $gh$ in the $k^2$-partition 
$\overline{V}$ that occurs due to block $P_{f_g(s)}^G \times P_{f_h(r)}^H$.

Let $D_k$ be a minimum $\{k\}$-dominating multiset of $G \Box H$ and $gh \in V(G\Box H)$. Since $D_k$ is a $\gamma_{k}$-set of 
$G \Box H$, there are \emph{at least} $k$ dominators  $d_0, \ldots, d_{k-1}$ of $gh$ in $D_k$ (when counted with multiplicity, thus $d_0,\ldots, d_{k-1}$ are not necessarily distinct dominators). 
We now create a function to assign a specific dominator in $D_k$ (not necessarily unique) to each of the $k^2$ copies of 
$gh \in \overline{V}$. Specifically, let $d: (gh)_{sr} \rightarrow \{d_0,\ldots,d_{k-1}\}$  (for $1 \leq s,r \leq k$) be defined as $d\big((gh)_{sr}\big) = d_{(s+r)\hspace{-3pt}\mod k}$. Note that for $s$ fixed and $1 \leq r \leq k$, the $k$ 
copies of vertex $(gh)_{sr}$ are assigned dominators $\{d_{(s + 1)\hspace{-3pt}\mod k}, \ldots, d_{(s + k)\hspace{-3pt}\mod k}\} = \{d_0,\ldots,d_{k-1}\}$ in $D_k$ (and similarly for $r$ fixed).

We now define a binary matrix corresponding to each of $P_{i}^G \times P_{j}^H$ block (for $i\in I$, $j\in J$) based on the ``type" of 
dominator assigned to a particular vertex $gh$. For $g \in P^G_i$ and  $h \in P^H_j$,  let $s = f_g^{-1}(i)$ and $r = f_h^{-1}(j)$. 
Then, we define the binary $|P^G_i| \times |P^H_j|$ matrix $F_{ij}$ such that:
\[
 F_{ij}(g, h) =
 \begin{cases}
 0 & \text{if } d\big((gh)_{sr}\big) \in N_{\underline{\mathbf{G}} \Box H}(gh)~, \\
 1 & \text{otherwise}~.
 \end{cases}
\]
Observe that for each $i\in I$ and $g \in P^G_i$, even though the function $f_g$ is not onto, the inverse $f_g^{-1}(i)$ is always 
defined since one of the $k$ copies of $g$ in the $k$-partition $P^G$ appears in block $P^G_i$ (and similarly for $f_h^{-1}(j)$). 
Furthermore, observe that $F_{ij}(g, h) = 1$ in two cases: 1) if $d \big((gh)_{sr}\big)$ is vertex $gh$ itself,  
or if $d \big((gh)_{sr}\big)$ dominates $(gh)_{sr}$ via an $\textbf{H}$-edge.

By Prop. \ref{fact1}, each of the binary matrices $F_{ij}$ satisfies one or both of the statements in Prop. \ref{fact1}. 
We will now define a series of multisets based on which of the properties $F_{ij}$ satisfies.
\begin{align*}
Z_i &=\big\{ \uplus (P^G_i\times P^H_j) : F_{ij} \text{ satisfies Prop. \ref{fact1}.a~, $1 \leq j \leq  \gamma_{\{k\}}(H)$} \big\}~, \hspace{10pt} \text{for $1 \leq i \leq \gamma_{\{k\}}(G)$}~,\\
\overline{Z}_j  &=\big\{ \uplus (P^G_i\times P^H_j) : F_{ij} \text{ satisfies Prop. \ref{fact1}.b~, $1 \leq i \leq  \gamma_{\{k\}}(G)$} \big\}~,  \hspace{12pt} \text{for $1 \leq j \leq \gamma_{\{k\}}(H)$}~,\\
N_i &=\big\{\overline{u}_j : F_{ij} \text{ satisfies Prop. \ref{fact1}.a~,  $1 \leq j \leq  \gamma_{\{k\}}(H)$} \big\}~, \hspace{10pt} \text{for $1 \leq i \leq \gamma_{\{k\}}(G)$}~,\\
\overline{N}_j &=\big\{u_i: F_{ij} \text{ satisfies Prop. \ref{fact1}.b~,  $1 \leq i \leq  \gamma_{\{k\}}(G)$} \big\}~, \hspace{12pt} \text{for $1 \leq j \leq \gamma_{\{k\}}(H)$}~,\\
Y_i &=\varPhi_{H}(Z_i) = \big\{ \uplus P^H_j : F_{ij} \text{ satisfies Prop. \ref{fact1}.a~, $1 \leq j \leq  \gamma_{\{k\}}(H)$} \big\}~, \hspace{10pt}\text{for $1 \leq i \leq \gamma_{\{k\}}(G)$}~,\\
\overline{Y}_j &= \varPhi_{G}(\overline{Z}_j) =\big\{ \uplus P^G_i : F_{ij} \text{ satisfies Prop. \ref{fact1}.b~, $1 \leq i \leq  \gamma_{\{k\}}(G)$} \big\}~, \hspace{9pt} \text{for $1 \leq j \leq \gamma_{\{k\}}(H)$}~,\\
S_i &= D_k \cap \Big( \uplus^k\hspace{1pt}\big(P^G_i \times V(H)\big) \Big)~, \hspace{10pt}\text{for $1 \leq i \leq \gamma_{\{k\}}(G)$}~,\\
\overline{S}_j &= D_k \cap \Big( \uplus^k\hspace{1pt}\big(V(G) \times P^H_j\big) \Big)~, \hspace{11pt} \text{for $1 \leq j \leq \gamma_{\{k\}}(H)$}~.
\end{align*}
To clarify the definition of $S_i$, observe that the intersection of two multisets $\{1,1,1,2,2,3\} \cap \{1,1,2,4\} = \{1,1,2\}$. 
We will now prove the following claim.
\begin{claim} \label{cl_dom}
For $i=1,\ldots, \gamma_{\{k\}}(G)$, $\varPsi_{H}(S_i)$ dominates $Y_i$, and
for $j=1,\ldots,\gamma_{\{k\}}(H)$, $\varPsi_{G}(\overline{S}_j)$ dominates
$\overline{Y}_j$.
\end{claim}
\begin{proof} In order to show that $\varPsi_{H}(S_i)$ dominates $Y_i$, we must show that 1) every vertex $y \in Y_i$
is dominated by some vertex $h \in S_i$, and 2) the number of occurrences of $y$-dominators in the multiset 
$\varPsi_H(S_i)$ is greater than or equal to the number of occurrences of $y$ in multiset $Y_i$.

In order to prove (1), consider $y \in Y_i$. By definition, there exists a $j$ such that $y \in P^H_j$ and $F_{ij}$ satisfies Prop. \ref{fact1}.a.
Since column $P^G_i \times y$ of $F_{ij}$ contains a ``1", there exists a $g \in P^G_i$ such that vertex $gy$ is dominated by 
an \textbf{H}-edge (or itself). Let $gh$ be a dominator of $gy$. Thus, there exists an $h \in \varPsi_{H}(S_i)$ such that $h$ dominates $y$.

In order to prove (2), consider $y \in Y_i$. Let $|Y_i|_y = t$. Recall that $Y_i \subseteq \uplus_{j\in J}\hspace{1pt}P_j^H = \uplus^k\hspace{1pt}V(H)$. 
Since $P^H$ is a $k$-partition of $V(H)$, $y$ appears in exactly $k$ blocks of $P^H$. Thus, $t \leq k$.

Let $\{j_1,\ldots,j_t\}$ be such that matrices $F_{ij_1},\ldots,F_{ij_t}$ satisfy Prop. \ref{fact1}.a, and $y$ is contained in 
$P^H_{j_1},\ldots, P^H_{j_t}$. Furthermore, let $g_1,\ldots,g_t \in P^G_i $ be such that $F_{ij_w}(g_w, y) = 1$ (for 
$1 \leq w \leq t$). Then, each of $g_wy$ is dominated by an \textbf{H}-edge (or itself), and given $g_w \neq g_{w'}$ with 
$1 \leq w, w' \leq t$, vertices $g_wy$ and $g_{w'}y$ are dominated by distinct vertices in $D_k$ (and by extension, $S_i$). However, we must now show that two identical vertices $g_{w}y, g_{w'}y$ (i.e., $w = w'$) have dominators with distinct indices in $D_k$. 

Recall that different occurrences of vertex $gh$ in the multiset $\overline{V}$ are denoted as $(gh)_{sr}$, indicating that the 
$sr$-th copy of $gh$ is due to the $P^G_{f_g(s)} \times P^H_{f_h(r)}$ block. In this case, two identical vertices $g_{w}y, g_{w'}y$ 
occur due to the same $P^G_i$ block, but different $P^H_j$ blocks. Furthermore, recall that the dominators $d_0,\ldots,d_{k-1}$ are 
assigned to the different occurrences of $gh$ via the function $d\big((gh)_{sr}\big) = d_{(s + r)\hspace{-3pt}\mod k}$, and that for $s$ fixed and $1 \leq r \leq k$, the $k$ copies of vertex $(gh)_{sr}$ are assigned dominators 
$\{d_{(s + 1)\hspace{-3pt}\mod k}, \ldots, d_{(s + k)\hspace{-3pt}\mod k}\} =\{ d_0,\ldots,d_{k-1}\} \subseteq D_{k}$. Therefore, the two identical vertices $g_wy$ and $g_{w'}y$ will map to dominators with distinct indices in $D_k$.


Finally, recall that the number of occurrences of a given $h \in \varPsi_{H}(S_i)$ is determined due to the \emph{sum} (as opposed 
to the \emph{maximum}). Given a vertex $y \in Y_i$ where $|Y_i|_y = t$, we have demonstrated that there are \emph{at least} $t$ vertices in 
$S_i =  D_k \cap \uplus^k\hspace{1pt}\big(P^G_i \times V(H)\big)$ (when counted with multiplicity) whose projection on $H$ dominates $y$, 
and therefore, there are at least $t$ $y$-dominators appearing in $\varPsi_H(S_i)$ (when counted with multiplicity). 

To conclude, we have demonstrated that 1) every vertex $y \in Y_i$ is dominated by some vertex $h \in S_i$, and 2)
the number of occurrences of $y$-dominators in the multiset $\varPsi_H(S_i)$ is greater than or equal to the number of occurrences of $y$ in multiset $Y_i$.
Therefore, $Y_i$ is dominated by $\varPsi_H(S_i)$.

Similarly, we can demonstrate that $\overline{Y}_j$ is dominated by $\varPsi_G(\overline{S}_j)$. 
\end{proof}

We will now carefully bound the sizes of the sets $N_i, \overline{N}_j, S_i, \overline{S_j}$, etc., in relation to each other. We observe 
that the total number of $P^G_i \times P^H_j$ blocks in the $k^2$-partition of $V(G\Box H)$ is $\gamma_{\{k\}}(G)\gamma_{\{k\}}(H)$. 
Since the binary matrix $F_{ij}$ associated with each these blocks satisfies at least one of the two conditions of Prop. \ref{fact1}, 
we see that
\begin{align*}
\gamma_{\{k\}}(G) \gamma_{\{k\}}(H) \leq \sum\limits_{i=1}^{\gamma_{\{k\}}(G)} \lvert N_i \lvert + \sum\limits_{j=1}^{\gamma_{\{k\}}(H)} \lvert \overline{N}_j \lvert~.
\end{align*}

Since $P_G$ is a $k$-partition of $V(G)$, every $g$ appears in exactly $k$ blocks of $P^G$. Thus, every $gh \in V(G\Box H)$ appears in exactly 
$k$ ``strips"  of $P^G \times V(H)$. Thus,  if $gh \in \big(D_k \cap (P^G_i \times V(H)\big)$, then $gh$ appears in ``strip" 
$D_k \cap \big( \uplus^k\hspace{1pt}P^G_i \times V(H) \big)$ exactly $|D_k|_{gh}$ times. Therefore, when we iterate over all the ``strips", 
we see
\begin{align*}
\sum\limits_{i=1}^{\gamma_{\{k\}}(G)} \lvert S_i \lvert_{gh} &= k |D_k|_{gh}~, \quad \quad
\sum\limits_{i=1}^{\gamma_{\{k\}}(G)} \lvert S_i \lvert =
\sum\limits_{j=1}^{\gamma_{\{k\}}(H)} \lvert \overline{S}_j \lvert = k|D_k| =k \gamma_{\{k\}}(G \Box H)~.
\end{align*}

Since $N_i$ dominates $Y_i$ (Prop. \ref{prop_domrel}.2), and since $\varPsi_H(S_i)$ is another multiset dominating $Y_i$ (Claim \ref{cl_dom}), 
we see by Prop. \ref{prop_domrel}.2 that
\begin{align*}
\lvert \varPsi_H(S_i) \lvert &\geq \lvert N_i\lvert~, \text{~for $1 \leq i \leq \gamma_{\{k\}}(G)$}~, \quad \text{and} \quad 
\lvert \varPsi_G(\overline{S}_j) \lvert \geq \lvert \overline{N}_j \lvert~, \text{~for $1 \leq j \leq \gamma_{\{k\}}(H)$}~.
\end{align*}
Furthermore, since the number of occurrences of a given vertex in the $\varPsi$-projection is determined by the \emph{sum} (as opposed to 
the \emph{maximum}),  $\lvert S_i \lvert = \lvert \varPsi_H(S_i) \lvert$, and 
$\lvert \overline{S}_j \lvert = \lvert \varPsi_G(\overline{S}_j) \lvert$. Therefore,
\begin{align*}
\lvert S_i \lvert &\geq \lvert N_i\lvert~, \text{~for $1 \leq i \leq \gamma_{\{k\}}(G)$}~, \quad \text{ and } \quad
\lvert \overline{S}_j \lvert \geq \lvert \overline{N}_j \lvert~, \hspace{9pt} \text{for $1 \leq j \leq \gamma_{\{k\}}(H)$}~.
\end{align*}
Combining all of these inequalities together, we finally see
\begin{align*}
\gamma_{\{k\}}(G) \gamma_{\{k\}}(H)
 \leq \sum\limits_{i=1}^{\gamma_{\{k\}}(G)} \lvert N_i \lvert + \sum\limits_{j=1}^{\gamma_{\{k\}}(H)} \lvert \overline{N}_j \lvert
\leq \sum\limits_{i=1}^{\gamma_{\{k\}}(G)} \lvert S_i \lvert + \sum\limits_{j=1}^{\gamma_{\{k\}}(H)} \lvert \overline{S}_j \lvert
= 2k \ \gamma_{\{k\}}(G \Box H)~.
\end{align*}
This concludes our proof.
\end{proof}

\section*{Acknowledgements} The authors would like to acknowledge the support of NSF-CMMI-0926618, the Rice University VIGRE program (NSF DMS-0739420 and EMSW21-VIGRE), and the Global Initiatives Fund (Brown School of Engineering at Rice University), under the aegis of SURGE (Summer Undergraduate Research Grant for Excellence), a joint program with the IIT Kanpur and the Rice Center for Engineering Leadership. Additionally, the authors acknowledge the support of NSF DSS-0729251, DSS-0240058, and the Defense Advanced Research Projects Agency
under Award No. N66001-10-1-4040. 



\bibliographystyle{plain}
\bibliography{vizing}

\end{document}